\documentclass{siamltex}
\usepackage{graphicx}
\usepackage{subfigure}
\usepackage{epsfig,psfig}
\usepackage{amsfonts,latexsym}
\usepackage{amsmath}

\newcommand{\R}{{\mathbb R}}

\newcommand{\bx}{\mbox{\boldmath{$x$}}}
\newcommand{\bb}{\mbox{\boldmath{$b$}}}

\newcommand{\br}{\mbox{\boldmath{$r$}}}

\newcommand{\be}{\mbox{\boldmath{$e$}}}

\newcommand{\bn}{\mbox{\boldmath{$n$}}}

\newcommand{\by}{\mbox{\boldmath{$y$}}}
\newcommand{\bz}{\mbox{\boldmath{$z$}}}
\newcommand{\sby}{\mbox{\boldmath{${\scriptstyle y}$}}}
\newcommand{\sbx}{\mbox{\boldmath{${\scriptstyle x}$}}}
\newcommand{\sbb}{\mbox{\boldmath{${\scriptstyle b}$}}}
\newcommand{\sbz}{\mbox{\boldmath{${\scriptstyle z}$}}}

\newcommand{\bzero}{\mbox{\boldmath{$0$}}}
\newcommand{\Null}[1]{\ensuremath{\mathcal{N}({#1})}}

\newcommand{\bl}{\begin{list}{ \ }{
\leftmargin=.325in}}
\newcommand{\el}{\end{list}}

\pagestyle{myheadings}
\markboth{G. Huang et al.}{Regularization matrices via matrix nearness problems}
\setcounter{page}{1}
\begin{document}
\date{}
\title{Regularization matrices determined \\ by matrix nearness problems}
\author{
Guangxin Huang\thanks{Geomathematics Key Laboratory of Sichuan, Chengdu University of
Technology, Chengdu, 610059, P. R. China. Email: {\tt huangx@cdut.edu.cn}. Research
supported by a grant from CSC, Research Fund Project (NS2014PY08) of SUSE and the young
scientific research backbone teachers of CDUT (KYGG201309).}
\and
Silvia Noschese\thanks{SAPIENZA Universit\`a di Roma, P.le A. Moro, 2,
I-00185 Roma, Italy. E-mail: {\tt noschese@mat.uniroma1.it}. Research
supported by a grant from SAPIENZA Universit\`a di Roma.}
\and
Lothar Reichel\thanks{Department of
Mathematical Sciences, Kent State University, Kent, OH 44242, USA.
E-mail: {\tt reichel@math.kent.edu}. Research supported in part by NSF
grant DMS-1115385.}
}

\maketitle

\begin{abstract}
This paper is concerned with the solution of large-scale linear discrete ill-posed
problems with error-contaminated data. Tikhonov regularization is a popular approach
to determine meaningful approximate solutions of such problems. The choice of
regularization matrix in Tikhonov regularization may significantly affect the quality of
the computed approximate solution. This matrix should be chosen to promote the recovery of
known important features of the desired solution, such as smoothness and monotonicity.
We describe a novel approach to determine regularization matrices with desired properties
by solving a matrix nearness problem. The constructed regularization matrix is the closest
matrix in the Frobenius norm with a prescribed null space to a given matrix. Numerical
examples illustrate the performance of the regularization matrices so obtained.
\end{abstract}

\begin{keywords}
Tikhonov regularization, regularization matrix,  matrix nearness problem.
\end{keywords}

\begin{AMS}
65R30, 65F22, 65F10.
\end{AMS}

\section{Introduction}
We are concerned with the computation of an approximate solution of linear
least-squares problems of the form
\begin{equation}\label{eq:sys}
   \min_{\sbx\in\R^n} \|K\bx-\bb\|, \qquad K \in \R^{m\times n},\quad \bb \in \R^m,
\end{equation}
with a large matrix $K$ with many singular values of different orders of magnitude close
to the origin. In particular, $K$ is severely ill-conditioned and may be singular. Linear
least-squares problems with a matrix of this kind often are referred to as linear discrete
ill-posed problems. They arise, for instance, from the discretization of linear ill-posed
problems, such as Fredholm integral equations of the first kind with a smooth kernel. The
vector $\bb$ of linear discrete ill-posed problems that arise in applications typically
represents measured data that is contaminated by an unknown error $\be\in\R^m$.

Let $\widehat{\bb}\in\R^m$ denote the unknown error-free vector associated with $\bb$,
i.e.,
\begin{equation}\label{noisefree}
\bb=\widehat{\bb}+\be,
\end{equation}
and let $\widehat{\bx}$ be the solution of the unavailable linear system of equations
\begin{equation}\label{consistent}
K\bx=\widehat{\bb},
\end{equation}
which we assume to be consistent. If $K$ is singular, then $\widehat{\bx}$ denotes the
solution of minimal Euclidean norm.

Let $K^\dag$ denote the Moore--Penrose pseudoinverse of $K$. The solution of minimal
Euclidean norm of \eqref{eq:sys}, given by
\[
K^\dag\bb=K^\dag\widehat{\bb}+K^\dag\be=\widehat{\bx}+K^\dag\be,
\]
typically is not a useful approximation of $\widehat{\bx}$ due to severe propagation of
the error $\be$. This depends on the large norm of $K^\dag$. Therefore, one generally
replaces the least-squares problem \eqref{eq:sys} by a nearby problem, whose solution is
less sensitive to the error $\be$. This replacement is known as regularization. One of the
most popular regularization methods is due to Tikhonov. This method replaces
\eqref{eq:sys} by a penalized least-squares problem of the form
\begin{equation}\label{eq:tik}
    \min_{{\sbx}\in\R^n}\left\{\|K\bx-\bb\|^2+\mu\|L\bx\|^2\right\},
\end{equation}
where $L\in \R^{p\times n}$ is referred to as a regularization matrix and the scalar
$\mu > 0$ as a regularization parameter; see, e.g., \cite{BRRS,EHN,Ha2}. Throughout this
paper $\|\cdot\|$ denotes the Euclidean vector norm or the spectral matrix norm. We
assume that the matrices $K$ and $L$ satisfy
\begin{equation}\label{nullcond}
\Null{K}\cap \Null{L} = \{\bzero\},
\end{equation}
where $\Null{M}$ denotes the null space of the matrix $M$. Then the minimization problem
\eqref{eq:tik} has the unique solution
\[
\bx_\mu=(K^TK+\mu L^TL)^{-1}K^T\bb
\]
for any $\mu>0$. The superscript $^T$ denotes transposition. When $L$ is the identity
matrix, the Tikhonov minimization problem (\ref{eq:tik}) is said to be in
\emph{standard form}, otherwise it is in \emph{general form}. We are interested in
minimization problems (\ref{eq:tik}) in general form.

The value of $\mu>0$ in (\ref{eq:tik}) determines how sensitive $\bx_\mu$ is to the error
$\be$, how close $\bx_\mu$ is to the desired solution $\widehat\bx$, and how
small the residual error $\bb-K\bx_\mu$ is. A suitable value of $\mu$ generally is not
explicitly known and has to be determined during the solution process.

Minimization problems \eqref{eq:tik} in general form with matrices $K$ and $L$ of small to
moderate size can be conveniently solved with the aid of the Generalized Singular Value
Decomposition (GSVD) of the matrix pair $\{K,L\}$; see, e.g., \cite{DNR2,Ha2}. We are interested
in developing solution methods for large-scale minimization problems \eqref{eq:tik}. These
problems have to be solved by an iterative method. However, the regularization matrices
$L$ derived also may be useful for problems of small size.

Common choices of regularization matrices $L$ in \eqref{eq:tik} when the least-squares
problem \eqref{eq:sys} is obtained by discretizing a Fredholm integral equation of the
first kind in one space-dimension are the $n\times n$ identity matrix $I_n$, and scaled
finite difference approximations of the first derivative operator,
\begin{equation}\label{L1}
L_1 = \frac{1}{2}\left[ \begin{array} {cccccc}
 1 &   -1    &     &    &   & \mbox{\Large 0} \\
   &  \phantom{-}1    &   -1    &     &    & \\
   &   &  \phantom{-}1    &  -1  &    &    \\
   &   &  & \ddots & \ddots  &   \\
 \mbox{\Large 0}   &        &        &    & \phantom{-}1  & -1
\end{array}
\right]\in{\R}^{(n-1)\times n},
\end{equation}
as well as of the second derivative operator,
\begin{equation}\label{L2}
L_2 = \frac{1}{4}\left[ \begin{array} {cccccc}
 -1   &  \phantom{-}2    &   -1    &     &    &\mbox{\Large 0} \\
      &   -1  &  \phantom{-}2    &  -1  &    &    \\
      &       & \ddots & \ddots & \ddots  &   \\
  \mbox{\Large 0}    &        &        &   -1  & \phantom{-}2      & -1
\end{array}
\right]\in{\R}^{(n-2)\times n}.
\end{equation}
The null spaces of these matrices are
\begin{equation}\label{nullL1}
{\mathcal N}(L_1)={\rm span}\{[1,1,\ldots,1]^T\}
\end{equation}
and
\begin{equation}\label{nullL2}
{\mathcal N}(L_2)={\rm span}\{[1,1,\ldots,1]^T,[1,2,\ldots,n]^T\}.
\end{equation}
The regularization matrices $L_1$ and $L_2$ damp fast oscillatory components of the
solution $\bx_\mu$ of \eqref{eq:tik} more than slowly oscillatory components. This can be
seen by comparing Fourier coefficients of the vectors $\bx$, $L_1\bx$, and $L_2\bx$; see,
e.g., \cite{RY}. These matrices therefore are referred to as \emph{smoothing
regularization matrices}. Here we think of the vector $\bx_\mu$ as a discretization of a
continuous real-valued function. The use of a smoothing regularization matrix can be
beneficial when the desired solution $\widehat\bx$ is a discretization of a smooth function.

The regularization matrix $L$ in (\ref{eq:tik}) should be chosen so that known important
features of the desired solution $\widehat{\bx}$ of \eqref{consistent} can be represented
by vectors in $\Null{L}$, because these vectors are not damped by $L$. For instance, if
the solution is known to be the discretization at equidistant points of a smooth
monotonically increasing function, then it may be appropriate to use the regularization
matrix \eqref{L2}, because its null space contains the discretization of linear functions.
Several approaches to construct regularization matrices with desirable properties are
described in the literature; see, e.g., \cite{CRS,DNR1,DR1,HJ,MRS,NR,RY}. Many of these
approaches are designed to yield square modifications of the matrices \eqref{L1} and
\eqref{L2} that can be applied in conjunction with iterative solution methods based on the
Arnoldi process. We will discuss the Arnoldi process more below.

The present paper describes a new approach to the construction of square regularization
matrices. It is based on determining the closest matrix with a prescribed null space to a
given square nonsingular matrix. For instance, the given matrix may be defined by
appending a suitable row to the finite difference matrix \eqref{L1} to make the matrix
nonsingular, and then prescribing a null space, say, \eqref{nullL1} or \eqref{nullL2}.
The distance between matrices is measured with the Frobenius norm,
\[
\|A\|_F:=\sqrt {\langle A,A\rangle},\qquad A\in{\R}^{p\times n},
\]
where the inner product between matrices is defined by
\[
\langle A,B\rangle:=\textrm{Trace}(B^TA),\qquad A,B\in{\R}^{p\times n}.
\]
Our reason for using the Frobenius norm is that the solution of the matrix nearness
problem considered in this paper can be determined with fairly little computations in
this setting.

We remark that commonly used regularization matrices in the literature, such as \eqref{L1}
and \eqref{L2}, are rectangular. Our interest in square regularization matrices stems from
the fact that they allow the solution of \eqref{eq:tik} by iterative methods that are based 
on the Arnoldi process.
Application of the Arnoldi process to the solution of Tikhonov minimization problems
\eqref{eq:tik} was first described in \cite{CMRS}; a recent survey is provided by Gazzola
et al. \cite{GNR}. We are interested in being able to use iterative solution methods that are
based on the
Arnoldi process because they only require the computation of matrix-vector products with
the matrix $A$ and, therefore, typically require fewer matrix-vector product evaluations
than methods that demand the computation of matrix-vector products with both $A$ and
$A^T$, such as methods based on Golub--Kahan bidiagonalization; see, e.g., \cite{LR} for
an example.

This paper is organized as follows. Section \ref{sec2} discusses matrix nearness problems
of interest in the construction of the regularization matrices. The application of
regularization matrices obtained by solving these nearness problems is discussed in
Section \ref{sec3}. Krylov subspace methods for the computation of an approximate solution
of \eqref{eq:tik}, and therefore of \eqref{eq:sys}, are reviewed in Section \ref{sec4}, 
and a few computed examples are presented in Section \ref{sec5}. Concluding remarks can be 
found in Section \ref{sec6}.

\section{Matrix nearness problems}\label{sec2}
This section investigates the distance of a matrix to the closest matrix with a
prescribed null space. For instance, we are interested in the distance of the
invertible square bidiagonal matrix
\begin{equation}\label{L1delta}
L_{1,\delta} = \frac{1}{2}\left[ \begin{array} {cccccc}
 1 &   -1    &     &    &  &  \mbox{\Large 0} \\
   &  \phantom{-}1    &   -1    &     &  &   \\
   &   &  \phantom{-}1    &  -1  &      &  \\
   &   &  & \ddots & \ddots  &   \\
   &        &        &    & \phantom{-}1 & -1  \\
 \mbox{\Large 0}   &        &        &    & \phantom{-}0 &\phantom{-}\delta
\end{array}
\right]\in{\R}^{n\times n}
\end{equation}
with $\delta>0$ to the closest matrix with the same null space as the rectangular matrix
(\ref{L1}). Regularization matrices of the form (\ref{L1delta}) with $\delta>0$ small have
been considered in \cite{CRS}; see also \cite{HJ} for a discussion.

Square regularization matrices have the advantage over rectangular ones that they can be
used together with iterative methods based on the Arnoldi process for Tikhonov
regularization \cite{CMRS,GNR} as well as in GMRES-type methods \cite{NRS}. These
applications have spurred the development of a variety of square regularization matrices.
For instance, it has been proposed in \cite{RY} that a zero row be appended to the
matrix (\ref{L1}) to obtain the square regularization matrix
\[
L_{1,0} = \frac{1}{2}\left[ \begin{array} {cccccc}
 1 &   -1    &     &    &   & \mbox{\Large 0} \\
   &  \phantom{-}1    &   -1    &     &    & \\
   &   &  \phantom{-}1    &  -1  &    &    \\
   &   &  & \ddots & \ddots  &   \\
   &        &        &    & \phantom{-}1  & -1 \\
 \mbox{\Large 0}   &        &        &    & \phantom{-}0 & \phantom{-}0
\end{array}
\right]\in{\R}^{n\times n}
\]
with the same null space. Among the questions that we are interested in is whether there
is a square regularization matrix that is closer to the matrix (\ref{L1delta}) than
$L_{1,0}$ and has the same null space as the latter matrix.
Throughout this paper ${\mathcal R}(A)$ denotes the range of the matrix $A$.

\begin{proposition}\label{prop1}
Let the matrix $V\in\R^{n\times\ell}$ have $1\leq\ell<n$ orthonormal columns and define
the subspace ${\mathcal V}:={\mathcal R}(V)$. Let ${\mathcal B}$ denote the subspace
of matrices $B\in\R^{p\times n}$ whose null space contains ${\mathcal V}$. Then $BV=0$ and
the matrix
\begin{equation}\label{k1}
\widehat{A}:=A(I_n-VV^T)
\end{equation}
satisfies the following properties:
\begin{enumerate}
\item
$\widehat{A}\in {\cal B}$;
\item
if $A\in {\cal B}$,  then $\widehat{A} \equiv A$;
\item
if $B\in {\cal B}$, then $\langle A-\widehat{A},B\rangle =0$.
\end{enumerate}
\end{proposition}

\begin{proof}
We have $\widehat{A}V=0$, which shows the first property. The second property
implies that $AV=0$, from which it follows that
\[
\widehat{A} = A-AVV^T=A.
\]
Finally, for any $B\in{\cal B}$,  we have
\begin{equation*}
\langle A-\widehat{A},B\rangle = \textrm{Trace}(B^TAVV^T) = 0,
\end{equation*}
where the last equality follows from the cyclic property of the trace.
\end{proof}

The following result is a consequence of Proposition \ref{prop1}.

\begin{corollary}\label{cor1}
The matrix (\ref{k1}) is the closest matrix to $A$ in ${\mathcal B}$ in the Frobenius
norm. The distance between the matrices $A$ and (\ref{k1}) is $\|AVV^T\|_F$.
\end{corollary}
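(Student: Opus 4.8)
The plan is to recognize that Corollary \ref{cor1} is simply an orthogonal-projection statement, so the natural tool is the Pythagorean identity combined with the orthogonality relation furnished by part 3 of Proposition \ref{prop1}. First I would note that $\mathcal{B}$ is a linear subspace of $\R^{p\times n}$: if two matrices both annihilate $\mathcal{V}$, then so does every linear combination of them. In particular, since $\widehat{A}\in\mathcal{B}$ by part 1 of Proposition \ref{prop1}, for any competitor $B\in\mathcal{B}$ the difference $\widehat{A}-B$ again lies in $\mathcal{B}$.

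Next, for an arbitrary $B\in\mathcal{B}$ I would split
\[
A-B=(A-\widehat{A})+(\widehat{A}-B)
\]
and apply part 3 of Proposition \ref{prop1} to the subspace element $\widehat{A}-B$, which gives $\langle A-\widehat{A},\widehat{A}-B\rangle=0$. The Pythagorean identity for the Frobenius inner product then yields
\[
\|A-B\|_F^2=\|A-\widehat{A}\|_F^2+\|\widehat{A}-B\|_F^2\geq\|A-\widehat{A}\|_F^2,
\]
with equality precisely when $\|\widehat{A}-B\|_F=0$, that is, when $B=\widehat{A}$. This establishes that $\widehat{A}$ is the unique closest matrix in $\mathcal{B}$ to $A$ in the Frobenius norm.

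For the value of the distance I would simply evaluate
\[
A-\widehat{A}=A-A(I_n-VV^T)=AVV^T,
\]
so that the minimal distance is $\|A-\widehat{A}\|_F=\|AVV^T\|_F$, as claimed.

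Since every step is forced once Proposition \ref{prop1} is available, I do not expect a genuine obstacle. The only point demanding care is the verification that $\widehat{A}-B$ remains in $\mathcal{B}$, because it is precisely this closure property that makes the orthogonality relation of part 3 applicable to the relevant difference and thereby legitimizes the Pythagorean splitting.
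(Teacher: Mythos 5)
Your proof is correct and is precisely the argument the paper intends when it calls the corollary ``a consequence of Proposition \ref{prop1}'': the subspace structure of $\mathcal{B}$, the orthogonality from part 3 applied to $\widehat{A}-B$, the Pythagorean identity, and the computation $A-\widehat{A}=AVV^T$. No gaps; your explicit check that $\widehat{A}-B\in\mathcal{B}$ is the right point to emphasize, since part 3 only gives orthogonality against elements of $\mathcal{B}$.
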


The matrix closest to a given matrix with a prescribed null space also can be
characterized in a different manner that does not require an orthonormal basis of the
null space. It is sometimes convenient to use this characterization.

\begin{proposition}\label{prop3new}
Let  ${\mathcal B}$  be the subspace of matrices $B\in\R^{p\times n}$ whose null space
contains ${\mathcal R}(V)$, where $V\in\R^{n\times\ell}$ is a rank-$\ell$ matrix. Then
the closest matrix to $A$ in ${\mathcal B}$ in the Frobenius norm is $AP$, where
\begin{equation}\label{projector}
P=I_n-V\Omega^{-1}V^T
\end{equation}
with $\Omega=V^TV$.
\end{proposition}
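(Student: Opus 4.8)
The plan is to mimic the argument behind Proposition \ref{prop1} and Corollary \ref{cor1}, replacing the projector $VV^T$ associated with an orthonormal basis by the orthogonal projector appropriate to a general full-rank $V$. First I would note that, since $V$ has rank $\ell$, the matrix $\Omega=V^TV\in\R^{\ell\times\ell}$ is symmetric positive definite and hence invertible, so that $P$ in \eqref{projector} is well defined. A short computation then gives $P^T=P$, $P^2=P$, and
\[
PV = V-V\Omega^{-1}V^TV = V-V\Omega^{-1}\Omega = 0,
\]
so $P$ is the orthogonal projector onto $\mathcal{R}(V)^\perp$, exactly the role played by $I_n-VV^T$ in the orthonormal setting of Proposition \ref{prop1}.

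Next I would assemble the two ingredients of a Pythagorean argument. Membership $AP\in\mathcal{B}$ is immediate from $APV=A(PV)=0$, which places $\mathcal{R}(V)$ in the null space of $AP$. For the orthogonality, I would compute, for an arbitrary $B\in\mathcal{B}$,
\[
\langle A-AP,B\rangle = \textrm{Trace}(B^T A V\Omega^{-1}V^T) = \textrm{Trace}\bigl(\Omega^{-1}(BV)^T A V\bigr) = 0,
\]
where the cyclic property of the trace is used to expose the factor $(BV)^T$, which vanishes by the defining relation $BV=0$ for matrices in $\mathcal{B}$. This is precisely the analogue of the third property in Proposition \ref{prop1}.

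Finally I would conclude by the standard projection step. Since $\mathcal{B}$ is a subspace and both $AP$ and any competitor $B$ belong to it, the difference $AP-B$ also lies in $\mathcal{B}$; hence the orthogonality just established yields
\[
\|A-B\|_F^2 = \|A-AP\|_F^2 + \|AP-B\|_F^2 \geq \|A-AP\|_F^2,
\]
with equality exactly when $B=AP$. Thus $AP$ is the unique closest matrix to $A$ in $\mathcal{B}$.

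I do not anticipate a serious obstacle: the statement is essentially a basis-free restatement of Corollary \ref{cor1}. The only points demanding care are the invertibility of $\Omega$, which rests on $V$ having full column rank, and the trace bookkeeping needed to confirm that the relevant factor is $(BV)^T=0$ rather than some other arrangement of the product. As a cross-check I would keep in mind the alternative route of factoring $V=US$ with $U\in\R^{n\times\ell}$ having orthonormal columns and $S$ nonsingular, observing that $\mathcal{B}$ depends only on $\mathcal{R}(V)=\mathcal{R}(U)$, applying Corollary \ref{cor1} to $U$, and identifying $UU^T=V\Omega^{-1}V^T$ by uniqueness of the orthogonal projector onto $\mathcal{R}(V)$.
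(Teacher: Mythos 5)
Your proof is correct and takes essentially the same route as the paper: the paper's own proof simply notes that $\Omega$ is invertible, identifies $P$ as the orthogonal projector with null space ${\mathcal R}(V)$, and invokes Proposition \ref{prop1} --- which is exactly the reduction you describe as a ``cross-check'' via $V=US$ and $UU^T=V\Omega^{-1}V^T$. Your main argument just carries out explicitly, with $V\Omega^{-1}V^T$ in place of $VV^T$, the trace computation and Pythagorean step that the paper delegates to Proposition \ref{prop1} and Corollary \ref{cor1}.
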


\begin{proof}
Since the columns of $V$ are linearly independent, the matrix $\Omega$ is positive
definite and, hence, invertible. It follows that $P$ is an orthogonal projector with null
space ${\mathcal R}(V)$. The desired result now follows from Proposition \ref{prop1}.
\end{proof}

It follows from Proposition \ref{prop1} and Corollary \ref{cor1} with
$V=n^{-1/2}[1,1,\dots,1]^T$, or from Proposition \ref{prop3new}, that the closest matrix to
$L_{1,\delta}$ with null space ${\mathcal N}(L_1)$ is $L_{1,\delta}P$, where
$P=[P_{h,k}]\in\R^{n\times n}$ is the orthogonal projector given by
\[
P_{h,k}=\left\{\begin{array}{cc}
        \displaystyle{-\frac{1}{n}},\qquad & h \neq k, \\\\
        \displaystyle{\frac{n-1}{n}},\qquad & h=k. \\
	\end{array}\right.
\]
Hence,
\[
L_{1,\delta}P= \frac{1}{2}\left[ \begin{array} {cccccc}
 1 &   -1    &     &    &   & \mbox{\Large 0} \\
   &  \phantom{-}1    &   -1    &     &    & \\
   &   &  \phantom{-}1    &  -1  &    &    \\
   &   &  & \ddots & \ddots  &   \\
   &        &        &    & \phantom{-}1  & -1 \\
-\frac{\delta}{n}  & -\frac{\delta}{n}       &  \ldots       &\ldots     & -\frac{\delta}{n} & (1-\frac{1}{n})\delta
\end{array}
\right]\in{\R}^{n\times n}.
\]
Thus, $\|L_{1,\delta}-L_{1,\delta}P\|_F=\frac{\delta}{2\sqrt{n}}$ is smaller than
$\|L_{1,\delta}-L_{1,0}\|_F=\frac{\delta}{2}$.

We turn to square tridiagonal regularization matrices. The matrix
\[
L_{2,0} = \frac{1}{4}\left[ \begin{array} {cccccc}
 \phantom{-}0    & \phantom{-}0 & \phantom{-}0 & & & \mbox{\Large 0} \\
 -1   &  \phantom{-}2    &   -1    &     &    & \\
      &   -1  &  \phantom{-}2    &  -1  &    &    \\
      &       & \ddots & \ddots & \ddots  &   \\
      &        &        &   -1  & \phantom{-}2      & -1\\
 \mbox{\Large 0}   &        &        & \phantom{-}0  & \phantom{-}0 & \phantom{-}0
\end{array}
\right]\in{\R}^{n\times n}
\]
with the same null space as (\ref{L2}) is considered in \cite{DR1,RY}. We can apply
Propositions \ref{prop1} or \ref{prop3new} to determine whether this matrix is the closest
matrix to
\[
\widetilde{L}_2 = \frac{1}{4}\left[ \begin{array} {cccccc}
 \phantom{-}2    & -1 & & & & \mbox{\Large 0} \\
 -1   &  \phantom{-}2    &   -1    &     &    & \\
      &   -1  &  \phantom{-}2    &  -1  &    &    \\
      &       & \ddots & \ddots & \ddots  &   \\
      &        &        &   -1  & \phantom{-}2      & -1 \\
 \mbox{\Large 0}   &        &        &  & -1 & \phantom{-}2
\end{array}
\right]\in{\R}^{n\times n}
\]
with the null space (\ref{nullL2}). We also may apply Proposition \ref{prop4} below, which
is analogous to Proposition \ref{prop3new} in that that no orthonormal basis for the null
space is required. The result can be shown by direct computations.

\begin{proposition}\label{prop4}
Given $A\in \R^{p\times n}$, the closest matrix to $A$ in the Frobenius norm with a null space
containing the linearly independent vectors $v^{(1)},v^{(2)}\in \R^{n}$ is given by
$A(I_n-C)$, where
\begin{equation}
 C_{i,j}=\frac{\|v^{(1)}\|^2 v_i^{(2)}v_j^{(2)}-[v_i^{(2)}v_j^{(1)}+v_i^{(1)}v_j^{(2)}](v^{(1)},v^{(2)})+\|v^{(2)}\|^2 v_i^{(1)}v_j^{(1)}}
 {\|v^{(1)}\|^2\|v^{(2)}\|^2-(v^{(1)},v^{(2)})^2}. \label{gc}
\end{equation}
\end{proposition}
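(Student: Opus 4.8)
The plan is to recognize Proposition~\ref{prop4} as nothing more than the explicit, entrywise form of Proposition~\ref{prop3new} in the special case $\ell=2$. First I would collect the two prescribed null-space vectors into the matrix $V:=[\,v^{(1)}\ \ v^{(2)}\,]\in\R^{n\times 2}$, so that $\mathcal{R}(V)=\span{v^{(1)},v^{(2)}}$ is precisely the two-dimensional subspace that the null space is required to contain. Since $v^{(1)}$ and $v^{(2)}$ are linearly independent, $V$ has rank $\ell=2$, and Proposition~\ref{prop3new} applies directly: the closest matrix to $A$ in the Frobenius norm whose null space contains $\mathcal{R}(V)$ is $AP=A(I_n-V\Omega^{-1}V^T)$ with $\Omega=V^TV$. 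It therefore suffices to verify that the matrix $C$ in (\ref{gc}) equals $V\Omega^{-1}V^T$.

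To do so, I would form the Gram matrix explicitly,
\[
\Omega=V^TV=\begin{bmatrix} \|v^{(1)}\|^2 & (v^{(1)},v^{(2)}) \\ (v^{(1)},v^{(2)}) & \|v^{(2)}\|^2 \end{bmatrix},
\]
whose determinant $\|v^{(1)}\|^2\|v^{(2)}\|^2-(v^{(1)},v^{(2)})^2$ is exactly the denominator of (\ref{gc}); by Cauchy--Schwarz this determinant is positive precisely because the two vectors are linearly independent, which re-confirms the invertibility of $\Omega$ already guaranteed by Proposition~\ref{prop3new}. Inverting the $2\times2$ matrix $\Omega$ by the adjugate formula and forming the product $V\Omega^{-1}V^T$ entrywise, the $(i,j)$ entry expands into the four terms $V_{ia}(\Omega^{-1})_{ab}V_{jb}$, built from the products $v_i^{(a)}v_j^{(b)}$ weighted by the entries of $\Omega^{-1}$. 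Collecting these terms reproduces the numerator of (\ref{gc}).

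There is no genuine obstacle here; the substance of the claim is the bookkeeping, and the only step needing a little care is matching the symmetric cross term $-[v_i^{(2)}v_j^{(1)}+v_i^{(1)}v_j^{(2)}](v^{(1)},v^{(2)})$ in (\ref{gc}) with the two off-diagonal contributions of $\Omega^{-1}$, whose equal entries $-(v^{(1)},v^{(2)})/\det\Omega$ multiply $v_i^{(1)}v_j^{(2)}$ and $v_i^{(2)}v_j^{(1)}$ and thereby produce exactly that combination. Once the identity $C=V\Omega^{-1}V^T$ is established, Proposition~\ref{prop4} follows immediately from Proposition~\ref{prop3new}.
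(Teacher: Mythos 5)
Your proposal is correct and follows essentially the route the paper intends: the authors state only that Proposition~\ref{prop4} "can be shown by direct computations," and the natural such computation is exactly yours, namely taking $V=[\,v^{(1)}\ v^{(2)}\,]$ in Proposition~\ref{prop3new}, inverting the $2\times 2$ Gram matrix $\Omega=V^TV$ by the adjugate formula, and checking entrywise that $V\Omega^{-1}V^T$ reproduces \eqref{gc}. The expansion you describe does match the numerator and denominator of \eqref{gc} term by term, so the argument is complete.
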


It follows easily from Proposition \ref{prop4} that the closest matrix to $\widetilde{L}_2$
with null space ${\mathcal N}(L_2)$ is $\widetilde{L}_2P$, where
$P=[P_{h,k}]\in\R^{n\times n}$ is an orthogonal projector defined by
\begin{equation}\label{P2}
P_{h,k}=\delta_{h,k}-\frac{2(n+1)(-3h+2n+1)+6k(2h-n-1)}{n(n+1)(n-1)},\quad h,k=1,\dots,n.
\end{equation}


The regularization matrices constructed above are generally nonsymmetric. We are also
interested in determining the distance between a given nonsingular symmetric matrix, such
as $\widetilde{L}_2$, and the closest symmetric matrix with a prescribed null space, such
as (\ref{nullL2}). The following results shed light on this.

\begin{proposition}\label{prop2}
Let the matrix $A\in\R^{n\times n}$ be symmetric, let $V\in\R^{n\times\ell}$ have
$1\leq\ell<n$ orthonormal columns and define the subspace
${\mathcal V}:={\mathcal R}(V)$. Let ${\mathcal B}_{sym}$ denote the subspace of
symmetric matrices $B\in\R^{n\times n}$ whose null space contains ${\mathcal V}$. Then
$BV=0$ and the matrix
\begin{equation}\label{k2}
\widehat{A}=(I_n-VV^T)A(I_n-VV^T)
\end{equation}
satisfies the following properties:
\begin{enumerate}
\item
$\widehat{A}\in {\cal B}_{sym}$;
\item
if $A\in {\cal B}_{sym}$,  then $\widehat{A} \equiv A$;
\item
if $B\in {\cal B}_{sym}$, then $\langle A-\widehat{A},B\rangle = 0$.
\end{enumerate}
\end{proposition}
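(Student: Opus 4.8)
The plan is to reduce everything to properties of the orthogonal projector $P:=I_n-VV^T$ onto $\mathcal{V}^\perp$, exactly as in Proposition~\ref{prop1}, but now exploiting that the two-sided form $\widehat{A}=PAP$ preserves symmetry. First I would record the elementary facts about $P$: since the columns of $V$ are orthonormal, $V^TV=I_\ell$, so $P$ is symmetric, idempotent ($P^2=P$), and satisfies $PV=V-VV^TV=0$. The assertion $BV=0$ for $B\in\mathcal{B}_{sym}$ is immediate, since $\mathcal{R}(V)\subseteq\Null{B}$ means $B$ annihilates every column of $V$; moreover, because $B$ is symmetric, $V^TB=(BV)^T=0$ as well, a fact I will need repeatedly.

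For property~1, I would note that $\widehat{A}^T=(PAP)^T=P^TA^TP^T=PAP=\widehat{A}$, using the symmetry of both $A$ and $P$, and that $\widehat{A}V=PA(PV)=0$, so $\mathcal{V}\subseteq\Null{\widehat{A}}$ and hence $\widehat{A}\in\mathcal{B}_{sym}$. For property~2, if $A\in\mathcal{B}_{sym}$ then $AV=0$ and, by symmetry, $V^TA=0$; expanding $\widehat{A}=A-VV^TA-AVV^T+VV^TAVV^T$ then shows every term after the first vanishes, so $\widehat{A}=A$.

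The only step requiring a little care is property~3. I would first obtain $A-\widehat{A}=VV^TA+AVV^T-VV^TAVV^T$ from the same expansion of $PAP$. Then, writing $\langle A-\widehat{A},B\rangle=\textrm{Trace}(B^T(A-\widehat{A}))=\textrm{Trace}(B(A-\widehat{A}))$ (using $B^T=B$), I would treat the three resulting traces separately and, in each, use the cyclic property of the trace to bring a factor $V$ adjacent to $B$, so that either $BV=0$ or $V^TB=0$ forces the term to zero. The main obstacle is purely bookkeeping: one must rotate each of the three terms so that a copy of $B$ sits next to a copy of $V$ (which is precisely why the symmetry of $B$, giving both $BV=0$ and $V^TB=0$, is essential), after which all three traces collapse to $0$ and the claimed orthogonality follows.
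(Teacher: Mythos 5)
Your proposal is correct and follows essentially the same route as the paper's proof: symmetry of $\widehat{A}=PAP$ plus $\widehat{A}V=0$ for property~1, the expansion $\widehat{A}=A-VV^TA-AVV^T+VV^TAVV^T$ for property~2, and the cyclic property of the trace applied to $\textrm{Trace}\bigl(B(VV^TA+AVV^T-VV^TAVV^T)\bigr)$ for property~3. You merely make explicit a detail the paper leaves implicit, namely that the symmetry of $B$ is what supplies $V^TB=0$ alongside $BV=0$ so that every trace term vanishes.
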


\begin{proof}
We have $\widehat{A}=\widehat{A}^T$ and $\widehat{A}V=0$, which shows the first property.
The second property implies that $AV=V^TA=0$, from which it follows that
\[
\widehat{A} = A-VV^TA-AVV^T+VV^TAVV^T=A.
\]
Finally, for any $B\in{\cal B}_{sym}$, it follows from the cyclic property of the trace
that
\[
\langle A-\widehat{A},B\rangle =\textrm{Trace}(BVV^TA+BAVV^T-BVV^TAVV^T) = 0.
\]
\end{proof}

\begin{corollary}\label{cor2}
The matrix (\ref{k2}) is the closest matrix to $A$ in ${\cal B}_{sym}$ in the Frobenius
norm. The distance between the matrices $A$ and (\ref{k2}) is given by
$\|VV^TAVV^T-VV^TA-AVV^T\|_F$.
\end{corollary}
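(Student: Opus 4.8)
The plan is to derive both assertions from Proposition \ref{prop2}, exactly as Corollary \ref{cor1} is derived from Proposition \ref{prop1}; the nearness claim rests on an orthogonality (Pythagorean) argument, while the distance formula is a short expansion.

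First I would record that ${\cal B}_{sym}$ is a linear subspace of $\R^{n\times n}$: the difference of two symmetric matrices whose null spaces contain ${\mathcal V}$ is again symmetric with null space containing ${\mathcal V}$. Combined with property 1 of Proposition \ref{prop2}, which places $\widehat{A}$ in ${\cal B}_{sym}$, this shows that for every $B\in{\cal B}_{sym}$ the matrix $\widehat{A}-B$ also lies in ${\cal B}_{sym}$. Property 3 then applies to $\widehat{A}-B$ and gives $\langle A-\widehat{A},\,\widehat{A}-B\rangle=0$.

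Next I would split $A-B=(A-\widehat{A})+(\widehat{A}-B)$ and invoke this orthogonality in the Frobenius inner product to obtain
\[
\|A-B\|_F^2=\|A-\widehat{A}\|_F^2+\|\widehat{A}-B\|_F^2\ \ge\ \|A-\widehat{A}\|_F^2,
\]
with equality if and only if $B=\widehat{A}$. This identifies $\widehat{A}$ as the unique closest matrix to $A$ in ${\cal B}_{sym}$.

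For the distance, I would expand $\widehat{A}=(I_n-VV^T)A(I_n-VV^T)=A-VV^TA-AVV^T+VV^TAVV^T$, so that $A-\widehat{A}=VV^TA+AVV^T-VV^TAVV^T$; taking Frobenius norms and using $\|X\|_F=\|-X\|_F$ gives the stated value $\|VV^TAVV^T-VV^TA-AVV^T\|_F$. I anticipate no real difficulty: the one point needing attention is the closure of ${\cal B}_{sym}$ under subtraction, which guarantees that property 3 may be applied to $\widehat{A}-B$ rather than merely to $B$, and this is immediate from the definition.
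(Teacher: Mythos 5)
Your argument is correct and is exactly the (implicit) derivation the paper intends: the paper states Corollary~\ref{cor2} as an immediate consequence of Proposition~\ref{prop2}, and the standard way to make that precise is the Pythagorean argument you give, using that ${\cal B}_{sym}$ is a subspace so property~3 applies to $\widehat{A}-B$. The expansion yielding the distance formula also matches the stated expression, since $A-\widehat{A}=VV^TA+AVV^T-VV^TAVV^T$ and the Frobenius norm is invariant under sign change.
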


Proposition \ref{prop2} characterizes the closest matrix in ${\mathcal B}_{sym}$ to a
given symmetric matrix $A\in\R^{n\times n}$. The following proposition provides another
characterization that does not explicitly use an orthonormal basis for the prescribed null
space. The result follows from Proposition \ref{prop2} and Corollary \ref{cor2} in a
straightforward manner.

\begin{proposition}\label{cor4new}
Let  ${\mathcal B}_{sym}$  be the subspace of symmetric matrices $B\in\R^{n\times n}$
whose null space contains ${\mathcal R}(V)$, where $V\in\R^{n\times\ell}$ is a rank-$\ell$
matrix. Then the closest matrix to the symmetric matrix $A$ in ${\mathcal B}_{sym}$ in the
Frobenius norm is $PAP$, where $P\in\R^{n\times n}$ is defined by (\ref{projector}).
\end{proposition}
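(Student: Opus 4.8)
The plan is to reduce the statement to the already-proved Proposition~\ref{prop2} by replacing $V$ with an orthonormal basis for its range. Since $V$ has rank $\ell$, the matrix $\Omega=V^TV$ is symmetric positive definite, hence invertible and equipped with a symmetric positive definite square root $\Omega^{1/2}$ whose inverse I denote $\Omega^{-1/2}$. Set $Q:=V\Omega^{-1/2}$. First I would check that $Q^TQ=\Omega^{-1/2}V^TV\Omega^{-1/2}=\Omega^{-1/2}\Omega\,\Omega^{-1/2}=I_\ell$, so that $Q$ has orthonormal columns, and that $\mathcal{R}(Q)=\mathcal{R}(V)$ because $\Omega^{-1/2}$ is invertible.

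The next observation is that the subspace $\mathcal{B}_{sym}$ is unchanged when $\mathcal{R}(V)$ is described through $Q$ rather than $V$: requiring that the null space of a symmetric matrix $B$ contain $\mathcal{R}(V)$ is equivalent to $BV=0$, which in turn is equivalent to $BQ=0$ since $Q$ and $V$ share the same column space. Consequently Proposition~\ref{prop2}, applied with the orthonormal matrix $Q$ in place of $V$, identifies the closest matrix to $A$ in $\mathcal{B}_{sym}$ as $(I_n-QQ^T)A(I_n-QQ^T)$, and Corollary~\ref{cor2} would furnish the corresponding distance if one wished to record it.

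It then remains only to identify the projector. Here I would compute $QQ^T=V\Omega^{-1/2}\Omega^{-1/2}V^T=V\Omega^{-1}V^T$, using that $\Omega^{-1/2}$ is symmetric, so that $I_n-QQ^T=I_n-V\Omega^{-1}V^T=P$, the projector of \eqref{projector}. Substituting gives $(I_n-QQ^T)A(I_n-QQ^T)=PAP$, which is the asserted minimizer. Alternatively, one may invoke the proof of Proposition~\ref{prop3new}, where $P$ was already shown to be the orthogonal projector with range $\mathcal{R}(V)^\perp$; by uniqueness of orthogonal projectors, $I_n-QQ^T$ and $P$ coincide, both projecting orthogonally onto $\mathcal{R}(V)^\perp$.

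There is no serious obstacle, consistent with the paper's remark that the result follows in a straightforward manner; the argument is essentially a change of basis feeding into Proposition~\ref{prop2}. The only point deserving care is confirming that $\mathcal{B}_{sym}$ depends on $V$ solely through $\mathcal{R}(V)$, so that passing from $V$ to $Q$ leaves the nearness problem—and hence its unique minimizer—intact.
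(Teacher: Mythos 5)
Your argument is correct and matches the route the paper intends: the paper states that the result ``follows from Proposition \ref{prop2} and Corollary \ref{cor2} in a straightforward manner,'' and your explicit orthonormalization $Q=V\Omega^{-1/2}$ together with the identification $I_n-QQ^T=I_n-V\Omega^{-1}V^T=P$ is precisely that reduction (mirroring how the paper proves Proposition \ref{prop3new} from Proposition \ref{prop1}). No gaps.
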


We are interested in determining the closest symmetric matrix to $\widetilde{L}_2$ with
null space in (\ref{nullL2}). It is given by $P\widetilde{L}_2P$, with $P$ defined in
(\ref{P2}). The inequalities
\[
\|\widetilde{L}_2-\widetilde{L}_2P\|_F<\|\widetilde{L}_2-P\widetilde{L}_2P\|_F <
\|\widetilde{L}_2-L_{2,0}\|_F=\frac{\sqrt{10}}{4}
\]
are easy to show. Figure \ref{fig2.1} displays the three 
distances for increasing matrix dimensions.

\begin{figure*}
\begin{center}
\includegraphics[scale=0.35]{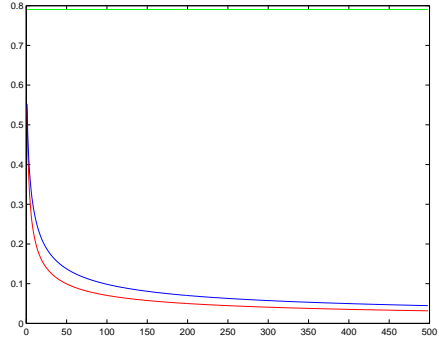} \\
\end{center}
\caption{Distances $\|\widetilde{L}_2-L_{2,0}\|_F$ (dashed  curve), 
$\|\widetilde{L}_2-P\widetilde{L}_2P\|_F$ (dash-dotted curve), and  
$\|\widetilde{L}_2-\widetilde{L}_2P\|_F$ (solid curve) as a function of the matrix
order $n$.}\label{fig2.1}
\end{figure*}

\section{Application of the regularization matrices}\label{sec3}
In this section we discuss the use of regularization matrices of the form $L=\tilde{L}P$ and $L=P\tilde{L}P$
in the Tikhonov minimization problem (\ref{eq:tik}), where $P$ is an orthogonal projector
and $\tilde{L}$ is nonsingular. We solve the problem (\ref{eq:tik}) by transforming it to
standard form in two steps. First, we let $\by=P\bx$ and then set $\bz=\tilde{L}\by$. 
Following Eld\'en \cite{El} or Morigi et al. \cite{MRS}, we express the Tikhonov
minimization problem
\begin{equation}\label{eq:tik1}
    \min_{{\sbx}\in\R^n}\left\{\|K\bx-\bb\|^2+\mu\|\tilde{L}P\bx\|^2\right\}
\end{equation}
in the form
\begin{equation}\label{eq:tik2}
    \min_{{\sby}\in\R^n}\left\{\|K_1\by-\bb_1\|^2+\mu\|\tilde{L}\by\|^2\right\},
\end{equation}
where
\[
K_1=KP^{\dag}_K,\qquad P^{\dag}_K=(I_n-(K(I_n-P^\dag P))^\dag K)P
\]
and
\[
\bb_1=\bb-K\bx^{(0)},\qquad \bx^{(0)}=(K(I_n-P^\dag P))^{\dag}\bb.
\]

Let the columns of $V_\ell$ form an orthonormal basis for the desired null space
of $L$. Then $P=I_n-V_\ell V_\ell^T$. Determine the QR factorization
\begin{equation}\label{qrfact}
    K V_\ell=QR,
\end{equation}
where $Q\in\R^{m\times\ell}$ has orthonormal columns and $R\in\R^{\ell\times\ell}$ is
upper triangular. It follows from (\ref{nullcond}) that $R$ is nonsingular, and we obtain
\begin{equation}\label{PK}
P_K^\dag = I_n-V_\ell R^{-1} Q^TK,\qquad KP_K^\dag = (I_m-QQ^T)K.
\end{equation}
These formulas are convenient to use in iterative methods for the solution of
\eqref{eq:tik2}; see \cite{MRS} for details. Let $\by_{\mu}$ solve \eqref{eq:tik2}. Then
the solution of (\ref{eq:tik1}) is given by $\bx_{\mu}=P_K^{\dag}\by_{\mu}+\bx^{(0)}$.

We turn to the solution of (\ref{eq:tik2}). This minimization problem can be expressed
in standard form
\begin{equation}\label{eq:tik3}
    \min_{{\sbz}\in\R^n}\left\{\|K_2\bz-\bb_1\|^2+\mu\|\bz\|^2\right\},
\end{equation}
where $K_2=K_1\tilde{L}^{-1}$. Let $\bz_{\mu}$ solve (\ref{eq:tik3}). Then the solution of
(\ref{eq:tik2}) is given by $\by_{\mu}=\tilde{L}^{-1}\bz_{\mu}$. In actual computations,
we evaluate $\tilde{L}^{-1}\bz$ by solving a linear system of equations with $\tilde{L}$.
We can similarly solve the problem (\ref{eq:tik}) with $L=P\tilde{L}P$ by transforming it to
standard form in three steps, where the first two steps are the same as above and the last
step is similar to the first step of the case with $L=\tilde{L}P$.

It is desirable that the matrix $\tilde{L}$ not be very ill-conditioned
to avoid severe error propagation when solving linear systems of equations with this
matrix. For instance, the condition number of the regularization matrix
$L_{1,\delta}$, defined by \eqref{L1delta}, depends on the parameter $\delta>0$. Clearly,
the condition number of $L_{1,\delta}$, defined as the ratio of the largest and smallest
singular value of the matrix, is large for $\delta>0$ ``tiny'' and of moderate size
for $\delta=1$. In the computations reported in Section \ref{sec5}, we use the latter
value.

\section{Krylov subspace methods and the discrepancy principle}\label{sec4}
A variety of Krylov subspace iterative methods are available for the solution of the
Tikhonov minimization problem \eqref{eq:tik3}; see, e.g., \cite{CMRS,CR,GNR,NRS} for
discussions and references. The discrepancy principle is a popular approach to determining
the regularization parameter $\mu$ when a bound $\varepsilon$ for the norm of the error
$\be$ in $\bb$ is known, i.e., $\|\be\|\leq\varepsilon$. It can be shown that the error in
$\bb_1$ satisfies the same bound. The discrepancy principle prescribes that $\mu>0$ be
chosen so that the solution $\bz_\mu$ of \eqref{eq:tik3} satisfies
\begin{equation}\label{nonlin}
\|K_2\bz_\mu-\bb_1\|=\eta\varepsilon,
\end{equation}
where $\eta>1$ is a constant independent of $\varepsilon$. This is a nonlinear
equation of $\mu$.

We can determine an approximation of $\bz_\mu$ by applying an iterative method to the
linear system of equations
\begin{equation}\label{K2sys}
K_{2}\bz=\bb_{1}
\end{equation}
and terminating the iterations sufficiently early. This is simpler than solving
\eqref{eq:tik3}, because it circumvents the need to solve the nonlinear equation
\eqref{nonlin} for $\mu$. We therefore use this approach in the computed examples of
Section \ref{sec5}. Specifically, we apply the Range Restricted GMRES (RRGMRES) iterative
method described in \cite{NRS}. At the $k$th step, this method computes an approximate
solution $\bz_k$ of \eqref{K2sys} as the solution of the minimization problem
\[
\min_{\sbz\in{\mathcal K}_k(K_{2},K_{2}\sbb_{1})} \|K_{2}\bz-\bb_{1}\|,
\]
where ${\mathcal K}_k(K_{2},K_2\bb_{1}):=
\mbox{span}\{K_{2}\bb_{1},K_{2}^2\bb_{1},\ldots,K_{2}^k\bb_{1}\}$ is a Krylov subspace.
The discrepancy principle prescribes that the iterations with RRGMRES be terminated as
soon as an iterate $\bz_k$ that satisfies
\begin{equation}\label{discrp1}
\|K_2\bz_k-\bb_1\|\leq\eta\varepsilon
\end{equation}
has been computed. The number of iterations required to satisfy this stopping criterion
generally increases as $\varepsilon$ is decreased. Using the transformation from
$\bz_\mu$ to $\bx_\mu$ described in Section \ref{sec3}, we transform $\bz_k$ to an
approximate solution $\bx_k$ of \eqref{eq:sys}. Further details can be found in
\cite{NRS}. Here we only note that $\|K_2\bz_k-\bb_1\|$ can be computed without
explicitly evaluating the matrix-vector product $K_2\bz_k$.

\section{Numerical examples}\label{sec5}
We illustrate the performance of regularization matrices of the form $L=\tilde{L}P$ and $L=P\tilde{L}P$.
The error vector ${\be}$ has in all examples normally distributed pseudorandom
entries with mean zero, and is normalized to correspond to a chosen noise level
\[
\nu:=\frac{\|{\be}\|}{\|\widehat{\bb}\|},
\]
where $\widehat{\bb}$ denotes the error-free right-hand side vector in (\ref{consistent}).
We let $\eta=1.01$ in (\ref{discrp1}) in all examples. Throughout this section $P_1$ and
$P_2$ denote orthogonal projectors with null spaces \eqref{nullL1} and \eqref{nullL2},
respectively. All computations are carried out on a computer with an Intel Core
i5-3230M @ 2.60GHz processor and 8GB ram using MATLAB R2012a. The computations are
done with about $15$ significant decimal digits.

\begin{table}[tbh]
\begin{center}
\begin{tabular}{cccc}
 \hline
reg. mat. & \# iterations $k$ & \# mat.-vec. prod. &
$\|{\bx}_k-\widehat{\bx}\|/\|\widehat{\bx}\|$\\
\hline
&&noise level $\nu=1\cdot10^{-2}$&\\
\hline
$I$                  & $4$ & $5$ & $3.5 \cdot 10^{-2}$ \\
$L_{1,0}$            & $3$ & $4$ & $6.5 \cdot 10^{-3}$ \\
$L_{1,\delta}P_{1}$  & $5$ & $7$ & $5.1 \cdot 10^{-3}$ \\
$L_{2,0}$            & $3$ & $4$ & $6.6 \cdot 10^{-3}$ \\
$\tilde{L}_{2}P_{2}$ & $4$ & $7$ & $9.5 \cdot 10^{-3}$ \\
$P_{2}\tilde{L}_{2}P_{2}$ & $1$ & $7$ & $1.5 \cdot 10^{-2}$ \\
\hline
&&noise level $\nu=1\cdot10^{-3}$&\\
\hline
$I$                  & $9$ & $10$ & $1.7 \cdot 10^{-2}$ \\
$L_{1,0}$            & $3$ & $4$ & $4.5 \cdot 10^{-3}$ \\
$L_{1,\delta}P_{1}$  & $7$ & $9$ & $1.2 \cdot 10^{-3}$ \\
$\tilde{L}_{2,0}$    & $3$ & $4$ & $4.5 \cdot 10^{-3}$ \\
$\tilde{L}_{2}P_{2}$ & $5$ & $8$ & $4.1 \cdot 10^{-3}$ \\
$P_{2}\tilde{L}_{2}P_{2}$ & $5$ & $11$ & $1.4 \cdot 10^{-2}$ \\
 \hline
 &&noise level $\nu=1\cdot10^{-4}$&\\
\hline
$I$                  & $10$ & $11$ & $6.1 \cdot 10^{-3}$ \\
$L_{1,0}$            & $6$ & $7$ & $2.8 \cdot 10^{-3}$ \\
$L_{1,\delta}P_{1}$  & $9$ & $11$ & $2.0 \cdot 10^{-3}$ \\
$\tilde{L}_{2,0}$    & $6$ & $7$ & $2.8 \cdot 10^{-3}$ \\
$\tilde{L}_{2}P_{2}$ & $7$ & $10$ & $2.1 \cdot 10^{-3}$ \\
$P_{2}\tilde{L}_{2}P_{2}$ & $6$ & $12$ & $3.9 \cdot 10^{-3}$ \\
 \hline
\end{tabular}
\end{center}
\caption{Example 5.1: Number of iterations, number of matrix-vector product evaluations
with the matrix $K$, and relative error in approximate solutions ${\bx}_k$ determined by
truncated iteration with RRGMRES using the discrepancy principle and different
regularization matrices for several noise levels.}\label{tab5.1}
\end{table}

\begin{figure*}[h!tbp]
\centering
\subfigure[]{\includegraphics[height=4cm,width=6cm]{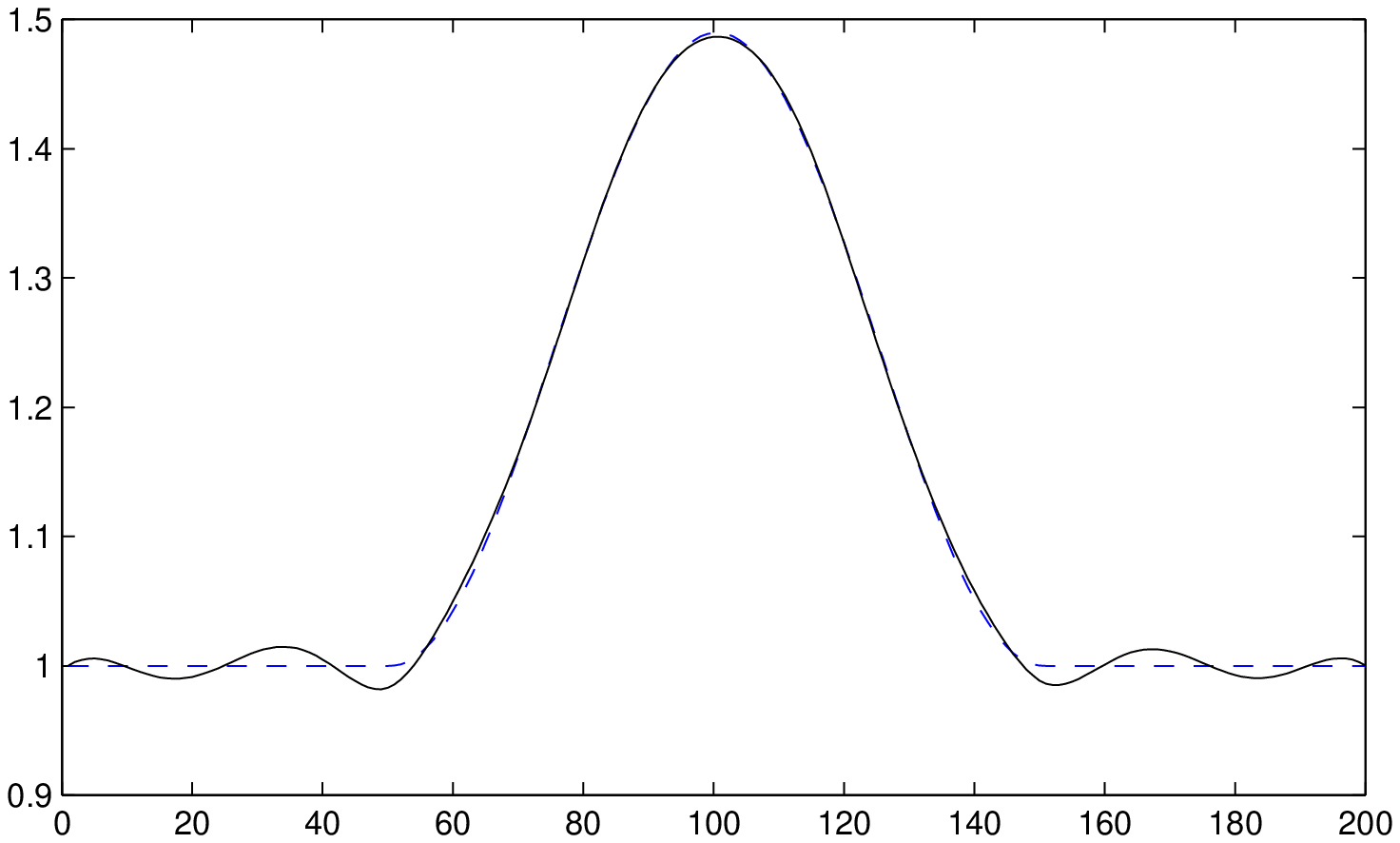}}
\subfigure[]{\includegraphics[height=4cm,width=6cm]{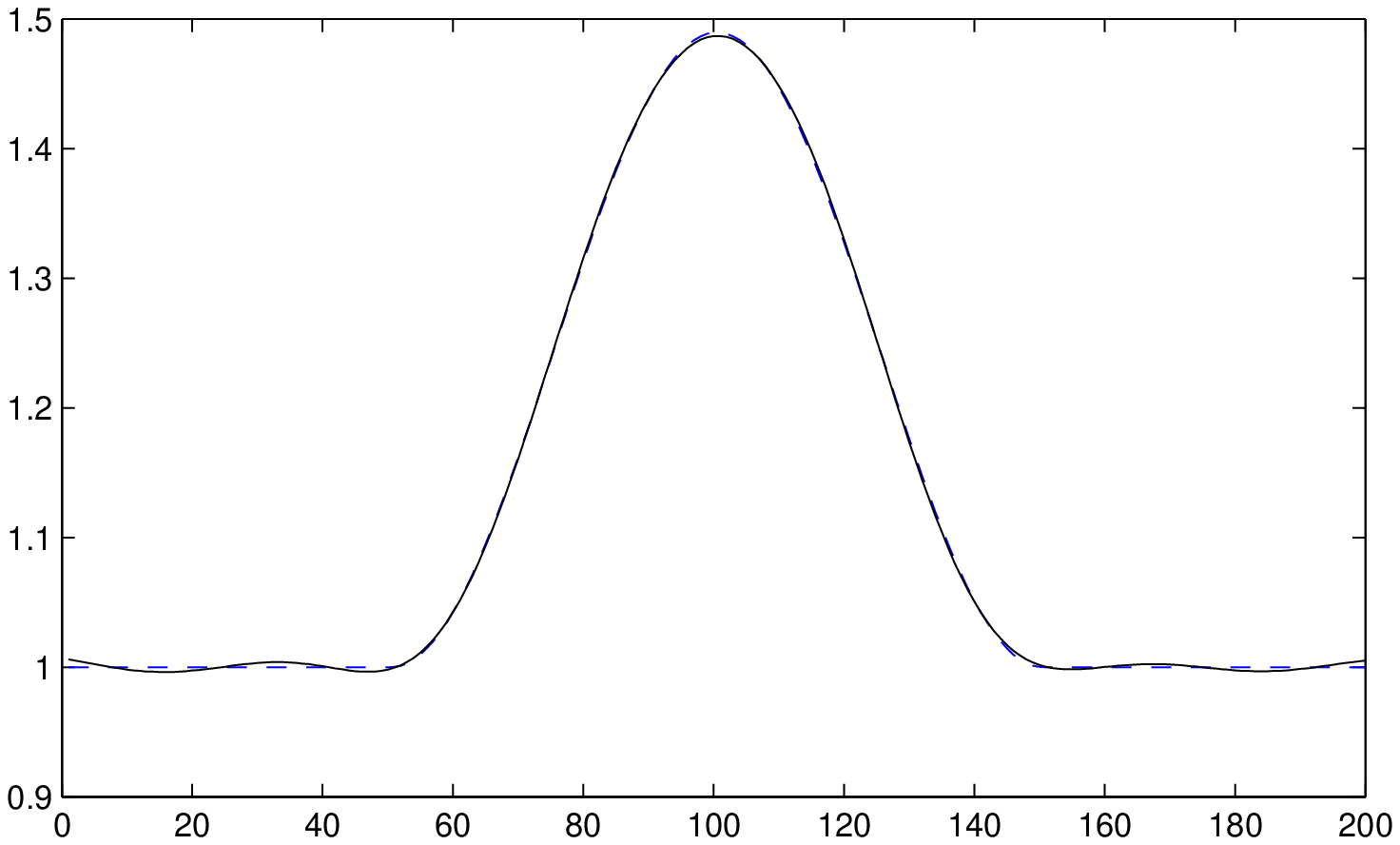}}
\subfigure[]{\includegraphics[height=4cm,width=6cm]{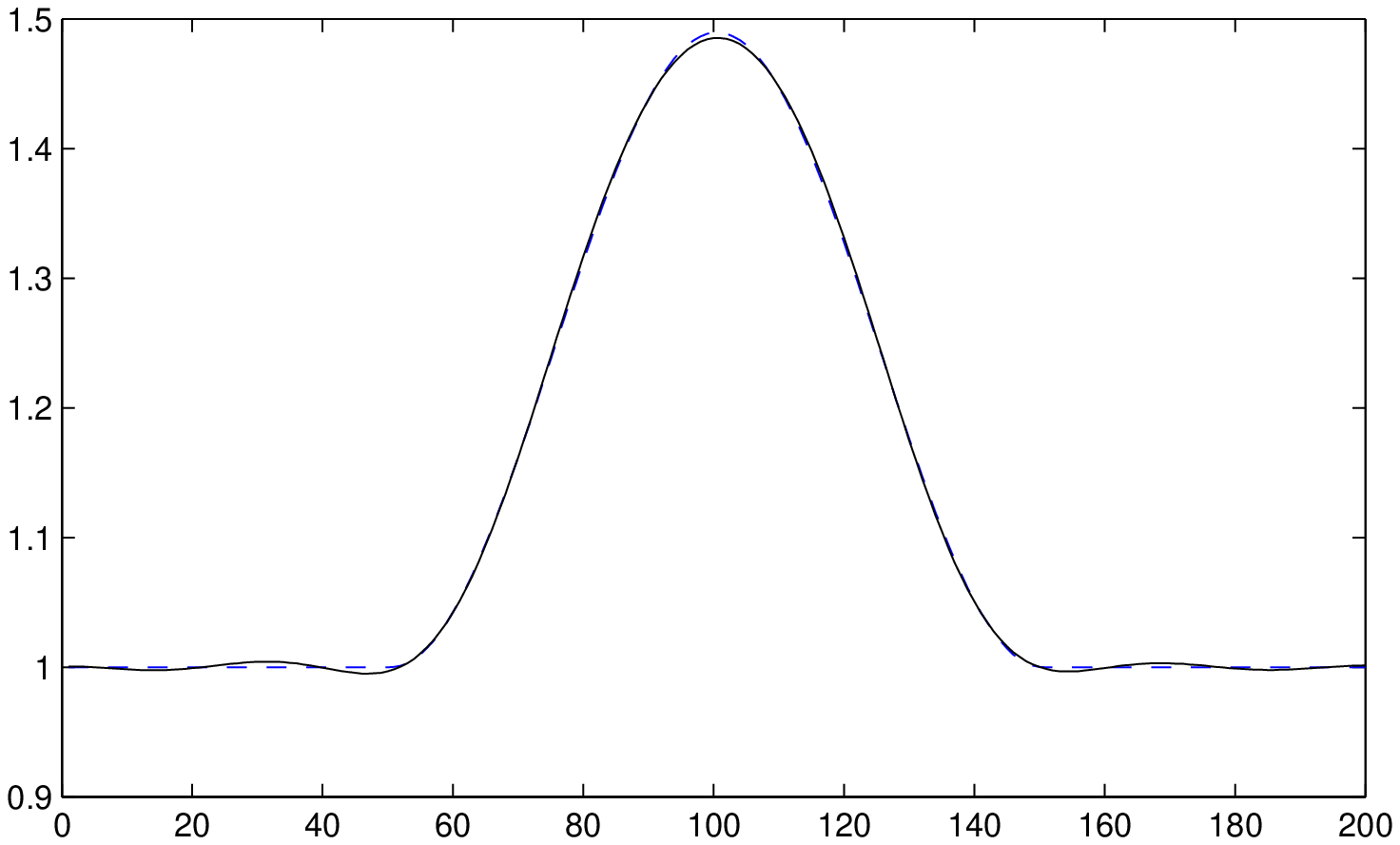}}
\subfigure[]{\includegraphics[height=4cm,width=6cm]{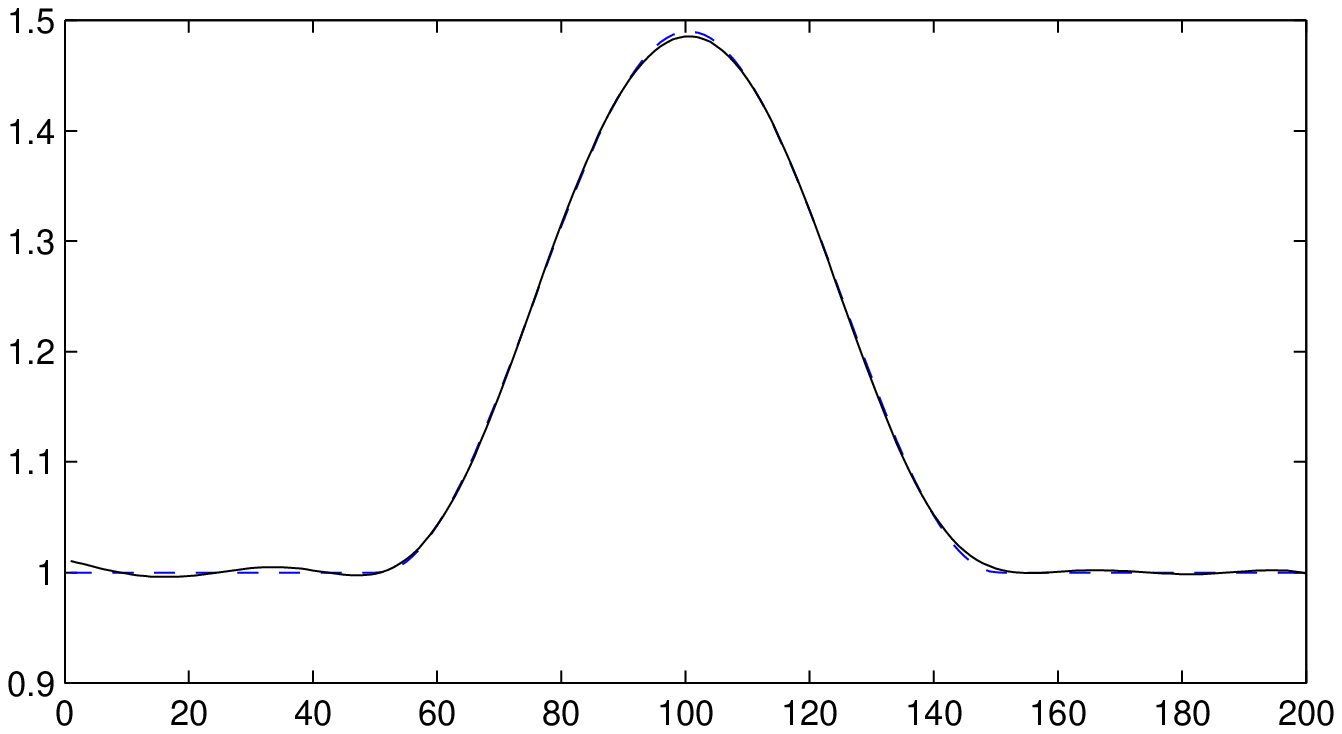}}
\caption{Example 5.1: Continuous curves: Computed approximate solutions ${\bx}_k$
determined by truncated iteration with RRGMRES using the discrepancy principle. The noise
level is $\nu=1\cdot10^{-4}$. (a) Iterate ${\bx}_{10}$ determined without regularization
matrix ($L=I$), (b) iterate ${\bx}_{9}$ determined with the regularization matrix
$L=L_{1,\delta}P_{1}$, (c) iterate ${\bx}_{7}$ determined with the regularization
matrix $L=\tilde{L}_{2}P_{2}$, and (d) iterate ${\bx}_{6}$ determined with the regularization
matrix $L=P_{2} \tilde{L}_{2}P_{2}$. The dashed curves show the desired solution $\widehat{\bx}$.}
\label{fig5.1}
\end{figure*}

Example 5.1. Consider the Fredholm integral equation of the first kind,
\begin{equation}\label{phillips}
\int_{-6}^{6}\kappa(\tau,\sigma)x(\sigma)d\sigma=g(\tau), \qquad
-6\le \tau \le 6,
\end{equation}
with kernel and solution given by
\[
\kappa(\tau,\sigma):=x(\tau-\sigma)
\]
and
\[
x(\sigma):=\left\{
\begin{array}{cl} 1 + \cos(\frac{\pi}{3}\sigma),\qquad &  \mbox{~~if~~} |\sigma|<3, \\
   0,\qquad &  \mbox{~~otherwise}.
   \end{array}\right.
\]
This equation is discussed by Phillips \cite{Ph}. We use the MATLAB code {\sf phillips}
from \cite{Ha3} to discretize (\ref{phillips}) by a Galerkin method with $200$ orthonormal
box functions as test and trial functions. The code produces the matrix
$K\in{\R}^{200\times 200}$ and a scaled discrete approximation of $x(\sigma)$. Adding
${\bn}_1=[1,1,\ldots,1]^{T}$ to the latter yields the vector $\widehat{\bx}\in{\R}^{200}$
with which we compute the error free right-hand side $\widehat{\bb}:=K\widehat{\bx}$. This
provides an example of a problem for which it is undesirable to damp the ${\bn}_1$-component
in the computed approximation of $\widehat{\bx}$.

The error vector ${\be}\in{\R}^{200}$ is generated as described above and normalized to
correspond to different noise levels
$\nu\in\{1\cdot 10^{-2},1\cdot 10^{-3},1\cdot 10^{-4}\}$. The data vector ${\bb}$ in
(\ref{eq:sys}) is obtained from (\ref{noisefree}).

Table \ref{tab5.1} displays results obtained with RRGMRES for several regularization
matrices and different noise levels, and Figure \ref{fig5.1} shows three computed
approximate solutions obtained for the noise level $\nu=1\cdot 10^{-4}$. The iterations
are terminated by the discrepancy principle (\ref{discrp1}). From Table \ref{tab5.1} and
Figure \ref{fig5.1}, we can see that the regularization matrix  $L=L_{1,\delta}P_{1}$
yields the best approximation of $\widehat{\bx}$. The worst approximation is obtained when
no regularization matrix is used with RRGMRES. This situation is denoted by $L=I$.
Table \ref{tab5.1} shows both the number of iterations and the number of matrix-vector
product evaluations
with the matrix $K$. The fact that the latter number is larger depends on the $\ell$
matrix-vector product evaluations with $K$ required to evaluate the left-hand side of
(\ref{qrfact}) and the matrix-vector product with $K$ needed for evaluating the
product of $P_K^\dag$ with a vector; cf. (\ref{PK}). $\Box$

\begin{table}[tbh]
\begin{center}
\begin{tabular}{cccc}
 \hline
reg. mat. & \# iterations $k$ & \# mat.-vec. prod. &
$\|{\bx}_k-\widehat{\bx}\|/\|\widehat{\bx}\|$\\
 \hline
&&noise level $v=1\cdot10^{-2}$&\\
\hline
$I$                    & $4$ & $5$ & $2.4 \cdot 10^{-1}$ \\
$L_{1,0}$              & $1$ & $2$ & $1.1 \cdot 10^{-2}$ \\
$L_{1,\delta}P_{1}$    & $1$ & $3$ & $2.6 \cdot 10^{-2}$ \\
$\tilde{L}_{2,0}$      & $0$ & $1$ & $3.1 \cdot 10^{-3}$ \\
$\tilde{L}_{2}P_{2}$   & $0$ & $3$ & $3.1 \cdot 10^{-3}$ \\
$P_{2}\tilde{L}_{2}P_{2}$   & $0$ & $6$ & $1.1 \cdot 10^{-2}$ \\
\hline
&&noise level $v=1\cdot10^{-3}$&\\
\hline
$I$                   & $8$ & $9$ & $1.5 \cdot 10^{-1}$ \\
$L_{1,0}$              & $3$ & $4$ & $7.3 \cdot 10^{-3}$ \\
$L_{1,\delta}P_{1}$   & $7$ & $9$ & $4.6 \cdot 10^{-2}$ \\
$\tilde{L}_{2,0}$      & $1$ & $2$ & $1.9 \cdot 10^{-3}$ \\
$\tilde{L}_{2}P_{2}$  & $1$ & $4$ & $1.7 \cdot 10^{-3}$ \\
$P_{2}\tilde{L}_{2}P_{2}$   & $0$ & $6$ & $1.1 \cdot 10^{-3}$ \\
 \hline
 &&noise level $v=1\cdot10^{-4}$&\\
\hline
$I$                    & $13$ & $14$ & $1.0 \cdot 10^{-1}$ \\
$L_{1,0}$              & $2$ & $3$ & $5.6 \cdot 10^{-3}$ \\
$L_{1,\delta}P_{1}$    & $26$ & $28$ & $8.0 \cdot 10^{-2}$ \\
$\tilde{L}_{2,0}$      & $2$ & $3$ & $1.4 \cdot 10^{-3}$ \\
$\tilde{L}_{2}P_{2}$   & $3$ & $6$ & $1.2 \cdot 10^{-3}$ \\
$P_{2}\tilde{L}_{2}P_{2}$   & $0$ & $6$ & $9.5 \cdot 10^{-5}$ \\
 \hline
\end{tabular}
\end{center}
\caption{Example 5.2: Number of iterations, number of matrix-vector product evaluations
with the matrix $K$, and relative error in approximate solutions ${\bx}_k$ determined by
truncated iteration with RRGMRES using the discrepancy principle and different
regularization matrices for several noise levels.}\label{tab5.2}
\end{table}

\begin{figure*}[h!tbp]
\centering
\subfigure[]{\includegraphics[height=4cm,width=6cm]{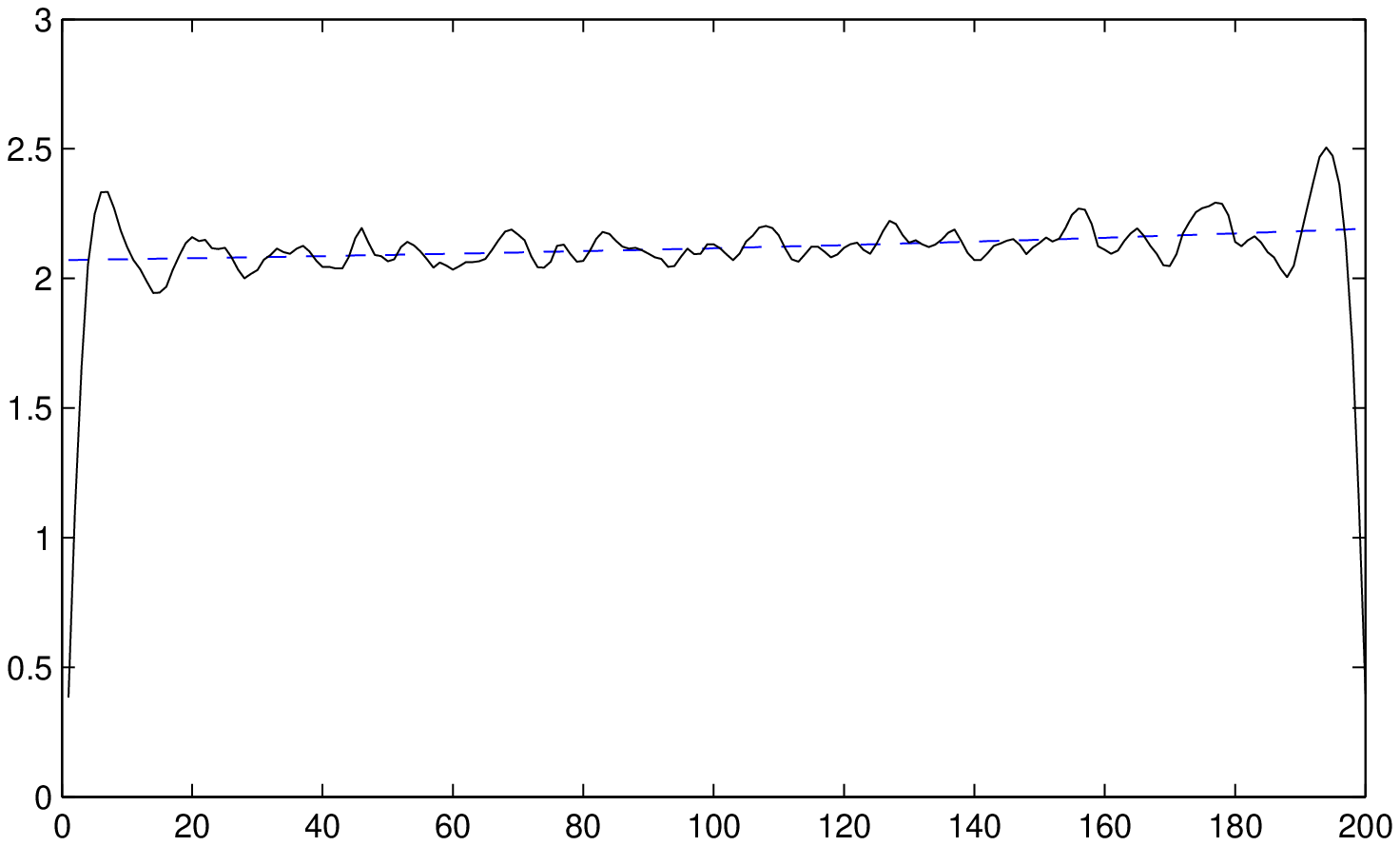}}
\subfigure[]{\includegraphics[height=4cm,width=6cm]{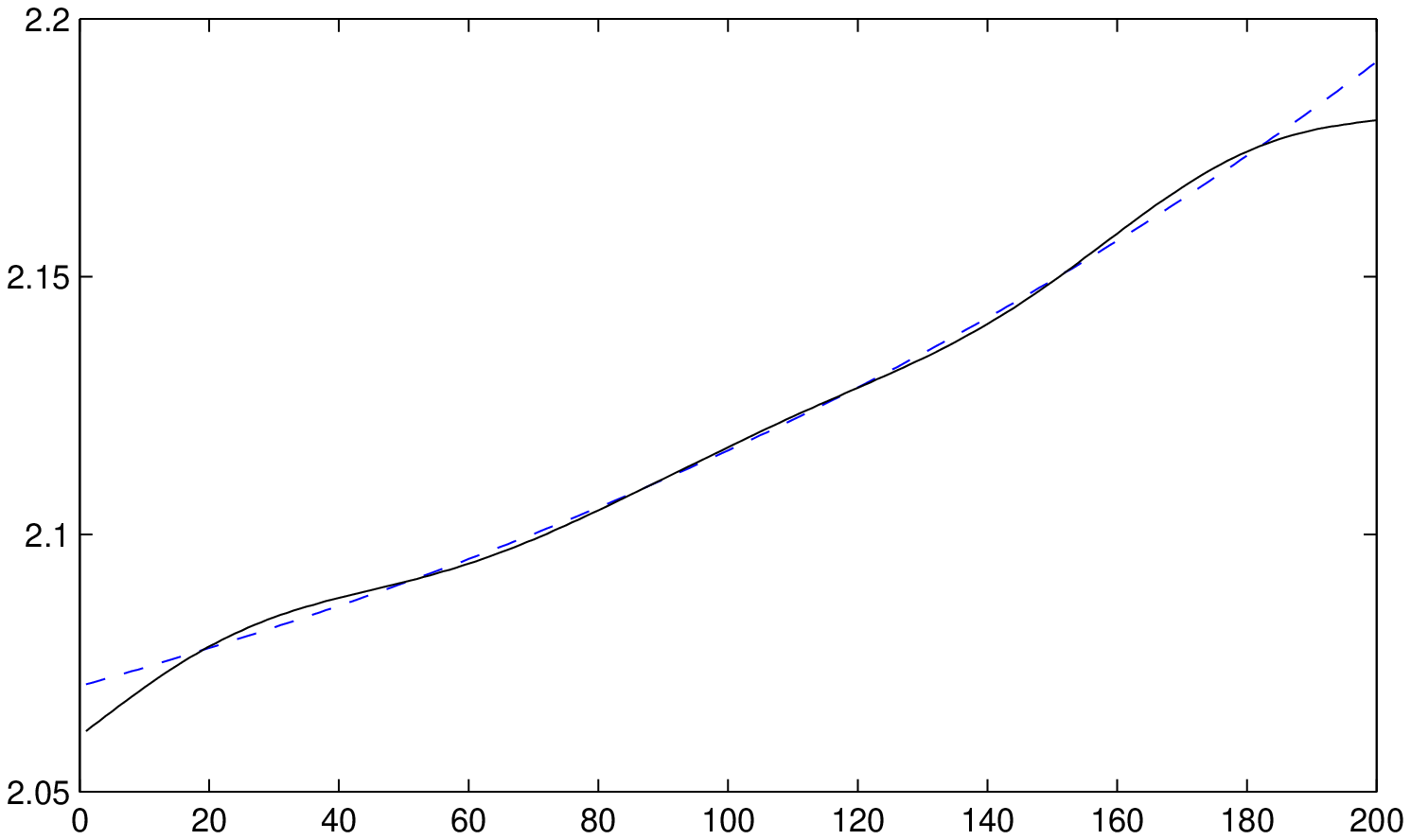}}
\subfigure[]{\includegraphics[height=4cm,width=6cm]{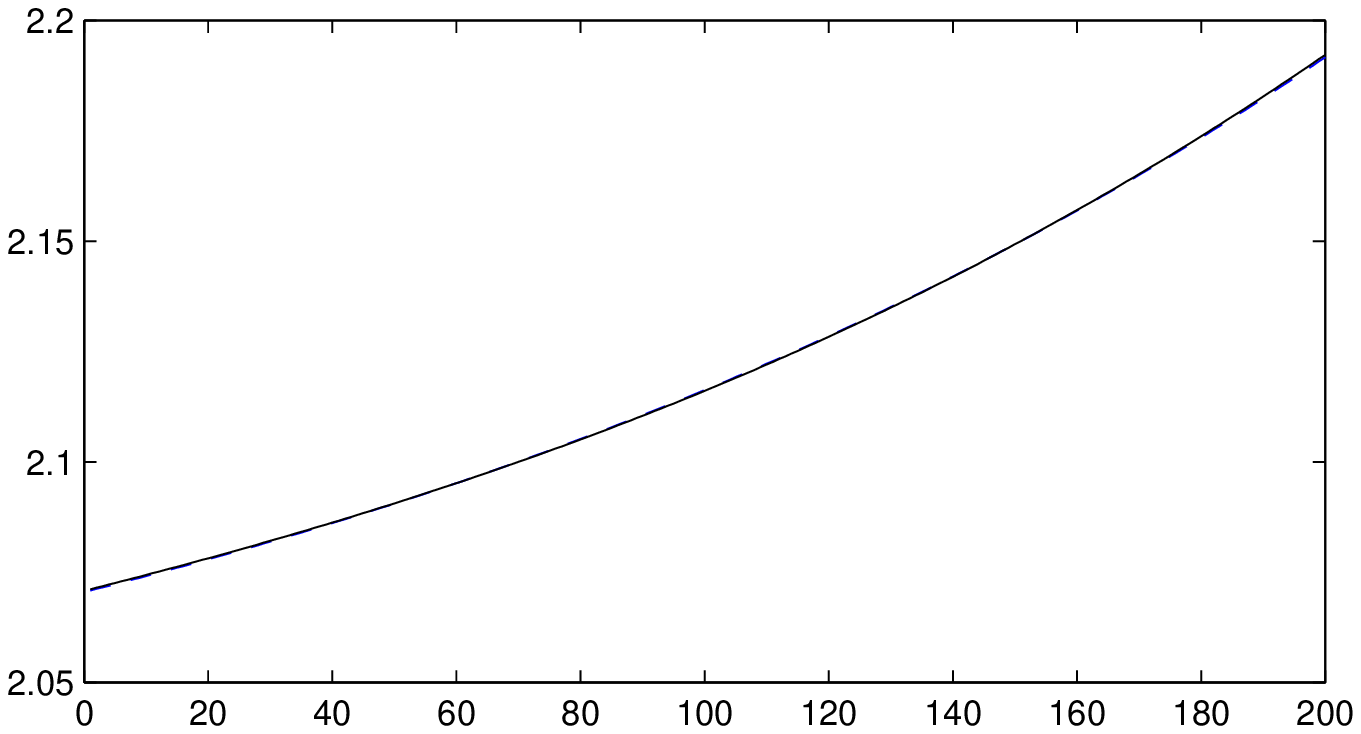}}
\caption{Example 5.2: Continuous curves: Computed approximate solutions ${\bx}_k$
determined by truncated iteration with RRGMRES using the discrepancy principle. The noise
level is $\nu=1\cdot10^{-4}$. (a) Iterate ${\bx}_{13}$ determined without regularization
matrix $(L:=I)$, (b) iterate ${\bx}_{3}$ determined with the regularization matrix
$L=\tilde{L}_{2}P_{2}$ and (c) iterate ${\bx}_{0}$ determined with the regularization matrix
$L=P_{2}\tilde{L}_{2}P_{2}$. The dashed curves show the desired solution $\widehat{\bx}$.}
\label{fig5.2}
\end{figure*}

Example 5.2. Regard the Fredholm integral equation of the first kind,
\begin{equation}\label{deriv2}
\int_0^1 k(s,t)x(t) \, dt = \exp(s)+(1-e)s+1,\qquad 0 \leq s \leq 1,
\end{equation}
where
\[
k(s,t)=\left\{\begin{array}{cc} s(t-1),\qquad &  s< t, \\
   t(s-1),\qquad &  s\geq t.
   \end{array}\right.
\]
We discretize \eqref{deriv2} by a Galerkin method with orthonormal box functions as test
and trial functions using the MATLAB program {\sf deriv2} from \cite{Ha3}. This program
yields a symmetric indefinite matrix $K\in{\R}^{200\times 200}$ and a scaled discrete
approximation of the solution $x(t)=\exp(t)$ of \eqref{deriv2}. Adding
${\bn}_1=[1,1,\ldots,1]^{T}$ yields the vector $\widehat{\bx}\in{\R}^{200}$ with which we
compute the error-free right-hand side $\widehat{\bb}:=K\widehat{\bx}$. Error vectors
$\be\in\R^{200}$ are constructed similarly as in Example 5.1, and the data vector ${\bb}$
in (\ref{eq:sys}) is obtained from (\ref{noisefree}).

Table \ref{tab5.2} shows results obtained with RRGMRES for different regularization matrices.
The performance for three noise levels is displayed. The
iterations are terminated with the aid of the discrepancy principle (\ref{discrp1}).
When $L=\tilde{L}_{2,0}$, $L=\tilde{L}_{2}P_{2}$ or $L=P_{2}\tilde{L}_{2}P_{2}$, and
the noise level is $\nu=1\cdot 10^{-2}$, as well as when $L=P_{2}\tilde{L}_{2}P_{2}$, and
the noise level is $\nu=1\cdot 10^{-3}$ or $\nu=1\cdot 10^{-4}$, the initial residual
${\br}_0:={\bb}-A{\bx}^{(0)}$ satisfies the discrepancy principle and no iterations are
carried out. Figure \ref{fig5.2} shows computed approximate solutions obtained for the
noise level $\nu=1\cdot 10^{-4}$ with the regularization matrix $L=\tilde{L}_{2}P_{2}$ and
without regularization matrix. Table \ref{tab5.2} and Figure \ref{fig5.2} show the
regularization matrix $L=\tilde{L}_{2}P_{2}$ to give the most accurate approximations of
the desired solution $\widehat{\bx}$. We remark that addition of the vector ${\bn}_1$ to
to the solution vector determined by the program  {\sf deriv2} enhances the benefit of
using a regularization matrix different from the identity. The benefit would be even
larger, if a larger multiple of the vector ${\bn}_1$ were added to the solution. $\Box$

The above examples illustrate the performance of regularization matrices suggested by the
theory developed in Section \ref{sec2}. Other combinations of nonsingular regularization
matrices and orthogonal projectors also can be applied. For instance, the regularization
matrix $L=\tilde{L}_2P_1$ performs as well as  $L=\tilde{L}_2P_2$ when applied to the
solution of the problem of Example 5.1.

\section{Conclusion}\label{sec6}
This paper presents a novel method to determine regularization matrices via the solution
of a matrix nearness problem. Numerical examples illustrate the effectiveness of the
regularization matrices so obtained. While all examples used the discrepancy principle to
determine a suitable regularized approximate solution of (\ref{eq:sys}), other parameter
choice rules also can be applied; see, e.g., \cite{Ki,RR} for discussions and references.

\section*{Acknowledgment}
SN is grateful to Paolo Butt\`a for valuable discussions and comments on part of the
present work. The authors would like to thank a referee for comments.

\end{document}